\newtheorem{ut}{Theorem}
\newtheorem{up}{Proposition}
\newtheorem{ul}{Lemma}
\DeclareMathAlphabet{\mathcalligra}{T1}{calligra}{m}{n}
\DeclareFontShape{T1}{calligra}{m}{n}{<->s*[2.2]callig15}{}
\newtheorem{uq}{Question}
\theoremstyle{remark}
\begin{document}

\title[Approximations by disjoint continua]{Approximations by disjoint continua and \\ a positive entropy conjecture}

\subjclass[2010]{37B45,  37B40, 54F15, 54G20} 
\keywords{continuum, rational, Suslinian, entropy, indecomposable}
\author[D.S. Lipham]{David S. Lipham}
\address{Department of Mathematics, Auburn University at Montgomery, Montgomery 
AL 36117, United States of America}
\email{dsl0003@auburn.edu,dlipham@aum.edu}

\begin{abstract}E.D. Tymchatyn constructed a hereditarily locally connected continuum which can be approximated by a sequence of mutually disjoint arcs. We show the example re-opens a conjecture of G.T. Seidler and H. Kato about continua which admit positive entropy homeomorphisms. We prove  that every indecomposable semicontinuum can be approximated by a sequence of disjoint subcontinua, and no composant of an indecomposable continuum can be embedded into a Suslinian continuum. We also prove that if $Y$ is a hereditarily unicoherent Suslinian continuum, then  there exists $\varepsilon>0$ such that every two  $\varepsilon$-dense subcontinua of $Y$ intersect. \end{abstract}

\maketitle

\section{Introduction}

In 1990, G.T. Seidler proved that every homeomorphism on a regular curve has zero topological entropy \cite[Theorem 2.3]{sei}.  He conjectured: \textit{Every homeomorphism on a rational curve has zero topological entropy} \cite[Conjecture 3.4]{sei}.  In 1993, H. Kato asked a related question: \textit{If $f:X\to X$ is a homeomorphism of a continuum $X$, and the topological entropy of $f$ is positive, is $X$ non-Suslinian?} \cite[Question 1]{kat}. A positive answer to the latter implies the former because every rational continuum is Suslinian (see Section 2 for definitions).  

In 2016, a positive answer to Kato's question was announced  \cite[Corollary 27]{mou2}. The proof in \cite{mou2} relies on \cite[Theorem 17]{mou2}, which is stronger than:

\begin{up}[{\cite[Theorem 30]{mou1}}]  If $(Y,d)$ is a continuum and $\{X_n\}_{n=0}^\infty$ is a collection of mutually disjoint subcontinua of $Y$ such that $$d_H(X_n,Y):=\sup_{y\in Y}d(y,X_n)\to 0\text{ as }n\to\infty,$$ then $Y$ is non-Suslinian.\end{up}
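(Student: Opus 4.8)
The plan is to prove non-Suslinianity by exhibiting directly an uncountable family of pairwise disjoint nondegenerate subcontinua, and the natural vehicle for manufacturing uncountably many objects out of the countable data $\{X_n\}$ is a Cantor scheme. First I would record the cheap consequences of the hypothesis. Since the $X_n$ are pairwise disjoint they cannot all equal $Y$, so each is a proper subcontinuum and $Y$ is nondegenerate, say $\operatorname{diam} Y = D > 0$; and because $Y$ lies in the $\varepsilon_n$-neighbourhood of $X_n$ with $\varepsilon_n := d_H(X_n,Y)\to 0$, any two points of $Y$ are within $2\varepsilon_n$ of two points of $X_n$, whence $\operatorname{diam} X_n \ge D - 2\varepsilon_n$. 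Thus from some index on the $X_n$ are large, pairwise disjoint continua, and in the hyperspace they satisfy $X_n \to Y$.

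The target is a family $\{W_s : s\in\{0,1\}^{<\omega}\}$ of subcontinua with $W_{s0},W_{s1}\subseteq W_s$, $W_{s0}\cap W_{s1}=\varnothing$, and $\operatorname{diam} W_s \ge \delta$ for a fixed $\delta>0$. Granting such a scheme, for each $\sigma\in\{0,1\}^\omega$ the nested intersection $K_\sigma = \bigcap_n W_{\sigma|n}$ is a subcontinuum, and it is nondegenerate: if closed sets decrease with diameters $\ge\delta$, choosing near-diametric pairs $a_n,b_n\in W_{\sigma|n}$ and passing to convergent subsequences places two points at distance $\ge\delta$ inside $\bigcap_n W_{\sigma|n}$, so $\operatorname{diam} K_\sigma \ge \delta$. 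For $\sigma\ne\tau$, disjointness of the two children at the first coordinate where they differ forces $K_\sigma\cap K_\tau=\varnothing$. Hence $\{K_\sigma : \sigma\in\{0,1\}^\omega\}$ is an uncountable pairwise disjoint family of nondegenerate subcontinua, which is exactly the witness that $Y$ is non-Suslinian.

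Everything then reduces to a single splitting step that both launches the scheme (at $W_\varnothing = Y$) and propagates it. The invariant I would carry is that $W_s$ is itself approximated by a sequence of pairwise disjoint subcontinua, which $Y$ satisfies by hypothesis. Given such a $W$, I would take two of its dense approximants $C,C'\subseteq W$, set $\rho = d(C,C')>0$, and separate $W$ along the closed set $\{x\in W : d(x,C)=d(x,C')\}$, using the boundary bumping theorem to extract subcontinua $W_0\supseteq C$ and $W_1\supseteq C'$ on the two sides, each still reaching across most of $W$ and so of large diameter; the disjointness of $W_0,W_1$ would come from their lying in the two distance-regions.

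I expect the genuine obstacle to be precisely the propagation of this invariant together with the uniform lower bound on diameters. A subcontinuum that is $\varepsilon$-dense in $W$ cannot sit inside a proper subregion, so the density of the approximants is \emph{not} inherited by a smaller disjoint piece $W_i$; one is forced to pass to \emph{pieces} $C\cap W_i$ of the approximants and to argue both that these pieces are continua (not merely closed sets) and that they again become dense in $W_i$, all while preventing $\operatorname{diam} W_s$ from decaying to $0$ along a branch. Controlling how the approximants cross the separating set, so that the resulting pieces stay connected, plentiful, and spread out, is the step I would scrutinize most carefully; it is here that bare disjointness plus density seems insufficient, and where structural hypotheses on $Y$ (of the kind that appear, for instance, in the hereditarily unicoherent case) look hard to avoid.
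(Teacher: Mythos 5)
Your proposal cannot succeed, and the reason is more fundamental than the propagation difficulty you flag at the end: the statement is false, and the paper's purpose in displaying this Proposition is to point out its falsity, not to prove it. Tymchatyn's Example 3 in \cite{tym} is a hereditarily locally connected continuum---hence rational and Suslinian---which is the closure of a first category ray (a ray limiting onto itself), and is therefore the limit in Hausdorff distance of a sequence of pairwise disjoint arcs. So the hypothesis of the Proposition can hold while the conclusion fails, and in that example no Cantor scheme of the kind you describe can exist: Suslinian means precisely that there is no uncountable pairwise disjoint family of subcontinua, which is exactly what your scheme would manufacture.

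To your credit, you located the irreparable step yourself: the invariant ``$W$ is approximated by pairwise disjoint subcontinua'' does not pass from $W$ to the two disjoint pieces $W_0,W_1$, and bare disjointness plus density gives no control over how the approximants cross the separating set. Your closing guess that hereditary unicoherence is the kind of hypothesis needed to rescue the argument matches the paper exactly: its Theorem 1 asserts the conclusion for hereditarily unicoherent continua. Note, though, that even there the paper does not run a Cantor scheme; it argues contrapositively that a hereditarily unicoherent Suslinian continuum admits an $\varepsilon>0$ such that \emph{any two} $\varepsilon$-dense subcontinua intersect (via a decomposition $Y=H\cup K$, a component $C$ of the complement of a neighborhood of $H\cap K$ with interior, and a connectedness argument on $(\overline C\cup E_0\cup E_1)\cap(H\cap K)$), so such a continuum cannot contain even two disjoint $\varepsilon$-dense subcontinua, let alone a sequence converging to $Y$.
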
 \noindent Unfortunately, Proposition 1 is false by \cite[Example 3]{tym}. The example, constructed by E.D. Tymchatyn in 1983,  is a hereditarily locally connected continuum which is the closure of a first category ray and is therefore the limit of a sequence of  disjoint arcs in Hausdorff distance $d_H$. It is well-known that hereditarily locally connected continua are rational and Suslinian.  Seidler's conjecture thus remains an open problem. We remark that  \cite[Theorem 2.8]{newkat} is also contingent on  \cite[Corollary 27]{mou2}.

The hypothesis of Proposition 1 defines what it means for a continuum to be approximated by a sequence of disjoint subcontinua.  Approximations of continua from within were originally studied by J. Krazinkiewicz and P. Minc  in \cite{minc}.   They proved:  \textit{If $Y$ is a hereditarily unicoherent plane continuum which contains disjoint $\varepsilon$-dense subcontinua for each $\varepsilon>0$, then $Y$ contains an indecomposable continuum \cite[Theorem 1]{minc}.}  S. Curry later proved that if the continuum $Y$ is tree-like then it can be written as the union of two indecomposable subcontinua \cite[Theorem 5]{cur}. These results do not extend to non-planar continua, as there exists a (hereditarily decomposable) dendroid which is  approximated by a sequence of disjoint subcontinua \cite[Example 2]{minc}.  We can, however, reach the weaker conclusion that $Y$ is non-Suslinian. 

\begin{ut}If $Y$ is a hereditarily unicoherent continuum which contains disjoint $\varepsilon$-dense subcontinua for every $\varepsilon>0$, then $Y$ is non-Suslinian.\end{ut}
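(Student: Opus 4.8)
The plan is to prove the logically equivalent contrapositive stated in the abstract: assuming $Y$ is hereditarily unicoherent and \emph{Suslinian}, I would produce an $\varepsilon>0$ for which every two $\varepsilon$-dense subcontinua meet. Working in the form actually asserted, this means assuming $Y$ is hereditarily unicoherent and admits disjoint $\varepsilon$-dense subcontinua for every $\varepsilon>0$, and then exhibiting an uncountable family of pairwise disjoint nondegenerate subcontinua, which is precisely what non-Suslinianity demands. The workhorse throughout is the standard reformulation of hereditary unicoherence: the intersection of any two subcontinua of $Y$ is connected, hence empty or again a subcontinuum. Since the family of subcontinua containing two fixed points is closed under finite intersections, this makes the irreducible continuum $[a,b]:=\bigcap\{K : K \text{ a subcontinuum}, a,b\in K\}$ well defined and connected for all $a,b\in Y$, and equips $Y$ with the tree order $y\preceq_p x \iff y\in[p,x]$ relative to a fixed base point $p$. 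These are the tools that will let me control how newly constructed pieces overlap.

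First I would fix a fast sequence $\varepsilon_k\to 0$ together with disjoint $\varepsilon_k$-dense subcontinua $A_k,B_k$, and analyze the ``bridge'' between a disjoint pair. Given disjoint subcontinua $A,B$, a minimal subcontinuum $T$ meeting both exists by compactness, hereditary unicoherence makes $T\cap A$ and $T\cap B$ connected, and choosing $a\in T\cap A$, $b\in T\cap B$ one may take $T=[a,b]$, an irreducible continuum whose interior misses $A\cup B$. Because $A_k$ and $B_k$ are $\varepsilon_k$-dense and disjoint, every point of $Y$ lies near \emph{both} of them, so in the limit $Y$ is ``doubly filled'' by two disjoint nets of subcontinua. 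This is exactly the mechanism that forces an indecomposable subcontinuum in the planar Krazinkiewicz--Minc theorem, and which I want to convert into many disjoint continua without assuming planarity.

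The main construction would be a Cantor (dyadic) scheme $\{C_s : s\in 2^{<\omega}\}$ of nondegenerate subcontinua with $C_\emptyset\subseteq Y$, children $C_{s0},C_{s1}\subseteq C_s$ disjoint, and diameters controlled so that for each branch $\sigma\in 2^\omega$ the intersection $C_\sigma:=\bigcap_n C_{\sigma|n}$ is a nondegenerate subcontinuum. Disjointness across branches is then automatic: if $\sigma\neq\tau$ split after a common prefix $s$, then $C_\sigma\subseteq C_{s0}$ and $C_\tau\subseteq C_{s1}$ are disjoint, yielding $2^{\aleph_0}$ pairwise disjoint nondegenerate subcontinua and hence non-Suslinianity. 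To drive the recursion I would, at each node, use the ambient disjoint dense subcontinua together with the bridge and irreducibility analysis to split $C_s$ into two disjoint subcontinua that are dense enough in $C_s$ to be split again, invoking connectedness of pairwise intersections to keep the two children disjoint while the approximation scale improves.

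The hard part will be making the recursion self-sustaining: the hypothesis that $Y$ is approximable by disjoint $\varepsilon$-dense subcontinua is a \emph{global} property and is not obviously inherited by the nodes $C_s$, so I cannot simply reapply it inside each $C_s$. Resolving this is where hereditary unicoherence must do real work---either by localizing the global pairs $A_k,B_k$ against a node through the connected intersections $A_k\cap C_s$ and $B_k\cap C_s$, or by first passing to an irreducible subcontinuum $Z\subseteq Y$ and running the dichotomy that $Z$ is either indecomposable (already non-Suslinian through its uncountably many pairwise disjoint composants) or carries a monotone layer decomposition onto an arc; in the latter case, uncountably many nondegenerate layers again give non-Suslinianity, while the residual ``countably many nondegenerate layers'' case is the genuinely delicate one, where the density hypothesis must be played against the near-arc structure to force two $\varepsilon$-dense subcontinua to meet. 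A secondary obstacle is keeping each $C_\sigma$ nondegenerate rather than a point, which I would handle by maintaining a uniform lower bound on the diameters of the splitting children, read off from hereditary unicoherence. As a fallback to the explicit scheme, I could instead build inside $Y$ an indecomposable semicontinuum---the natural non-planar weakening of Krazinkiewicz--Minc's indecomposable continuum---and invoke the fact that no composant of an indecomposable continuum embeds into a Suslinian continuum to conclude that $Y$ is non-Suslinian.
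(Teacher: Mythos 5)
Your proposal does not close the argument along any of its three proposed routes, and each route has a concrete unresolved obstruction. For the Cantor scheme, you yourself name the fatal issue---the hypothesis of disjoint $\varepsilon$-dense subcontinua is global and is not inherited by the nodes $C_s$---but you offer no mechanism to resolve it: ``localizing'' the pairs $A_k,B_k$ against a node via the intersections $A_k\cap C_s$ fails as stated, since those intersections (though connected, by hereditary unicoherence) may be empty or degenerate, and even when nondegenerate they need not be dense in $C_s$ at any useful scale. For the irreducible-continuum dichotomy, you explicitly leave open the ``countably many nondegenerate layers'' case, which is exactly where the content of the theorem would have to live. For the fallback, citing the fact that no composant of an indecomposable continuum embeds into a Suslinian continuum (Theorem 3 of this paper) requires producing inside $Y$ a space \emph{homeomorphic to such a composant}; an arbitrary indecomposable semicontinuum does not suffice---the paper notes there is an indecomposable connected subset of the plane that \emph{does} embed into a Suslinian continuum---and in any case you give no construction of one. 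Note also that no route through an indecomposable \emph{subcontinuum} can work: Krasinkiewicz and Minc's Example 2 is a hereditarily decomposable dendroid (hence hereditarily unicoherent) containing disjoint $\varepsilon$-dense subcontinua for every $\varepsilon>0$, so any object you build would have to be non-compact.

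The actual proof is far simpler than anything you outline and proceeds by contradiction rather than by exhibiting uncountably many disjoint continua. Assume $Y$ is hereditarily unicoherent and Suslinian; then $Y$ is decomposable, so $Y=H\cup K$ with $H,K$ proper subcontinua. Take open $W\supset H\cap K$ with $\overline W\neq Y$. Boundary bumping puts a nondegenerate continuum in each component of $Y\setminus\overline W$, so the Suslinian property makes the set of components countable, and Baire's theorem gives a component $C$ with nonempty interior. Choose $\varepsilon>0$ so that $H\setminus K$, $K\setminus H$, and $C$ each contain an $\varepsilon$-ball. If $E_0,E_1$ are $\varepsilon$-dense subcontinua, then each $E_i$ meets $H\setminus K$ and $K\setminus H$, hence (being connected inside $H\cup K$) meets $H\cap K$; and each meets $C$, so $\overline C\cup E_0\cup E_1$ is a continuum. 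Hereditary unicoherence makes $M=(\overline C\cup E_0\cup E_1)\cap(H\cap K)$ connected, while $\overline C$ misses $H\cap K\subset W$, so $M=(E_0\cap H\cap K)\cup(E_1\cap H\cap K)$ is a union of two nonempty closed sets; connectedness forces $E_0\cap E_1\neq\varnothing$. This single fixed $\varepsilon$ kills all disjoint $\varepsilon$-dense pairs at once, which is the step your recursive and dichotomy-based schemes were trying, unsuccessfully, to replace.
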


So Proposition 1 is true for all hereditarily unicoherent continua, including all tree-like continua. 

 Next, we will investigate the role of 
continuum-wise connected spaces, or \textit{semicontinua}, in approximations.   
We  show that each   continuum $Y$  which densely contains an indecomposable semicontinuum $X$ can be approximated by a sequence of disjoint continua (in the sense of Proposition 1). This will be a consequence of Theorem 2.  Further, if $X$ is homeomorphic to a composant of an indecomposable continuum, then $Y$ is non-Suslinian. An even  stronger result is stated in Theorem 3.

\begin{ut}If  $(X,d)$ is an indecomposable semicontinuum, then there is a sequence of pairwise disjoint continua $K_0,K_1,K_2,\ldots\subsetneq X$ such that $K_n\to X$ in the Vietoris topology. In particular, $d(x,K_n)\to 0$ for each $x\in X$.\end{ut}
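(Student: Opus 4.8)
The plan is to construct the subcontinua $K_n$ inside composants of $X$, exploiting the defining feature of indecomposability: that proper subcontinua are "small." First I recall the relevant structure. A semicontinuum $X$ is indecomposable if it is not the union of two proper subcontinua; equivalently, every proper subcontinuum has empty interior (is nowhere dense), and the composants partition $X$ into dense, pairwise disjoint, first-category pieces. Let me think about how to get a single $\varepsilon$-dense proper subcontinuum first, then iterate while maintaining disjointness.

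The key construction is as follows. Fix a countable dense set $\{x_0,x_1,x_2,\ldots\}\subseteq X$. For each target index $n$ I want a proper subcontinuum $K_n$ that is $2^{-n}$-dense (i.e.\ meets the $2^{-n}$-ball around each $x_i$ for $i\le n$, hence is close to all of $X$ as $n\to\infty$), and such that the $K_n$ are pairwise disjoint. I would build these greedily. Since $X$ is continuum-wise connected, any two points lie in a common proper subcontinuum; using indecomposability, finite unions of such proper "arcs" through increasingly fine dense samples remain proper subcontinua (a finite union of proper subcontinua all meeting a fixed point is again a proper subcontinuum, and in an indecomposable continuum no proper subcontinuum is the whole space). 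So for each $n$ I can produce a proper subcontinuum $C_n\subsetneq X$ that passes within $2^{-n}$ of each of $x_0,\ldots,x_n$, which forces $C_n\to X$ in the Vietoris topology and $d(x,C_n)\to 0$ for every $x\in X$.

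The main obstacle — and the heart of the proof — is arranging \emph{pairwise disjointness} while keeping each $K_n$ dense. The idea is to place the $K_n$ in distinct composants. Because $X$ has uncountably many composants (a standard fact for indecomposable continua, and the analogue holds for indecomposable semicontinua), and each composant is dense, I can select a sequence of distinct composants $\kappa_0,\kappa_1,\ldots$ and build $K_n$ inside $\kappa_n$. Since distinct composants of an indecomposable (semi)continuum are disjoint, the resulting $K_n$ are automatically pairwise disjoint. The remaining work is to verify that within a single dense composant $\kappa_n$ I can still find a \emph{proper} subcontinuum that is $2^{-n}$-dense: this follows because $\kappa_n$ is dense in $X$, so it meets every $2^{-n}$-ball, and $\kappa_n$ is itself continuum-wise connected, so finitely many of its points can be joined by a proper subcontinuum lying in $\kappa_n$. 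I expect the delicate point to be confirming that the composant structure and the "finite union of proper subcontinua through a common point is proper" lemma transfer cleanly from indecomposable continua to indecomposable \emph{semicontinua}, since the ambient space need not be compact; I would isolate that as the one lemma to check carefully, and conclude that $K_n\subsetneq X$, the $K_n$ are disjoint, and $K_n\to X$ in the Vietoris topology, which immediately yields $d(x,K_n)\to 0$ for each $x\in X$.
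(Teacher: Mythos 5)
Your proposal has a genuine, fatal gap at exactly the point you flagged as ``the one lemma to check carefully'': the claim that an indecomposable semicontinuum has uncountably many (or even two) composants. The analogue of Mazurkiewicz's theorem does \emph{not} transfer from indecomposable continua to indecomposable semicontinua, because its proof is a Baire-category argument that needs compactness (composants are meager $F_\sigma$ sets, and a \emph{compact} space cannot be covered by one of them). In fact the standard examples show the claim is false: let $X$ be a composant of an indecomposable continuum $I$ (the paper notes this is an indecomposable semicontinuum, and it is the object of Theorem 3). Given any $x,y\in X$, choose proper subcontinua $A\ni p,x$ and $B\ni p,y$ of $I$, where $p$ is a base point of the composant; then $A\cup B$ is a proper subcontinuum of $I$ (properness by indecomposability of $I$, since $A\cap B\neq\varnothing$), it lies inside $X$, and it is a \emph{proper} subcontinuum of $X$ because it is compact while $X$ is not. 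Hence any two points of $X$ lie in a common proper subcontinuum of $X$: the space $X$ has exactly \emph{one} composant. So your mechanism for disjointness --- placing each $K_n$ in a distinct dense composant --- collapses entirely; there are no distinct composants to use. (The density of composants is a second fact that also relies on boundary bumping in a compact space, but the counting failure alone is decisive.)

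The parts of your argument that do survive are the easy ones: joining finitely many points of a countable dense set by continua through a common point produces a proper $2^{-n}$-dense subcontinuum (properness of finite unions follows directly from the definition of indecomposability by induction), so a sequence $C_n\to X$ in the Vietoris topology exists without any composant theory. The entire difficulty, as you correctly sensed, is pairwise disjointness, and that is where the paper does something genuinely different: it proves a ``disruption'' lemma stating that in an indecomposable semicontinuum, the union $K$ of finitely many proper subcontinua never disrupts a finite family of non-empty open sets, i.e.\ there is always a continuum in $X\setminus K$ meeting every member of the family. That lemma (proved by a minimal-counterexample argument ending in a contradiction with nowhere-density of proper closed connected subsets) is precisely the tool that lets one choose $K_n$ disjoint from $K_0\cup\cdots\cup K_{n-1}$ inside the Vietoris basic open set $\langle U_0,\ldots,U_{n-1}\rangle$ at each step of a greedy induction. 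Your proof needs a replacement for that lemma; the composant route cannot supply one.
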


 \begin{ut}If $X$ is homeomorphic to a composant of an indecomposable continuum, then $X$ cannot be embedded into a  Suslinian continuum. Moreover, every compactification of $X$ contains  $\mathfrak c=|\mathbb R|$ pairwise disjoint dense  semicontinua.
\end{ut}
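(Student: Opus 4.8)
The plan is to derive the non‑embedding assertion from the ``moreover'' clause, and to prove the latter by comparing $Y$ with the original indecomposable continuum through a common refinement.

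First I would reduce the first statement to the second. Suppose $X$ embeds in a Suslinian continuum $Z_0$, and let $Z=\overline X$ be the closure of the image of $X$ in $Z_0$. Then $Z$ is a subcontinuum of $Z_0$, hence itself Suslinian, and $X$ is dense in $Z$, so $Z$ is a (metrizable) compactification of $X$. Granting the moreover clause, $Z$ contains $\mathfrak c$ pairwise disjoint dense semicontinua; since $Z$ is nondegenerate, each of them contains a nondegenerate subcontinuum, and these are pairwise disjoint. This yields $\mathfrak c$ pairwise disjoint nondegenerate subcontinua of $Z$, contradicting Suslinianity. Thus everything rests on the moreover clause. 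I note that the weaker approximation results cannot be substituted here: Theorem 2 and Proposition 1 only produce \emph{countably} many disjoint subcontinua, which is entirely consistent with being Suslinian.

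For the moreover clause, write $X=\kappa$, a composant of an indecomposable continuum $C$, and fix an arbitrary compactification $Y$ of $\kappa$. My plan is to pass to $W=\overline{\Delta}\subseteq Y\times C$, the closure of the diagonal $\Delta=\{(x,x):x\in\kappa\}$; this is a continuum and a compactification of $\kappa$, and the coordinate projections restrict to continuous surjections $\pi_Y\colon W\to Y$ and $\pi_C\colon W\to C$ which are the identity on $\kappa$. The key point is that over each point of $\kappa$ both projections have a singleton fibre: if $(y,c)\in W$ has either coordinate in $\kappa$, then, since the two subspace topologies on $\kappa$ agree, any diagonal net $(x_i,x_i)$ converging to $(y,c)$ forces $y=c\in\kappa$. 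Consequently both projections are \emph{irreducible}: a closed subset of $W$ mapping onto $C$ (or onto $Y$) must contain every singleton fibre over $\kappa$, hence contain $\Delta$, hence equal $W$. From irreducibility of $\pi_C$ together with indecomposability of $C$ I can show $W$ is indecomposable: if $W=A\cup B$ with $A,B$ proper subcontinua, then $C=\pi_C(A)\cup\pi_C(B)$ forces, say, $\pi_C(A)=C$, whence $A\supseteq\Delta$ and $A=W$, a contradiction. Being indecomposable, $W$ has $\mathfrak c$ pairwise disjoint dense composants, each a dense semicontinuum.

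It remains to transfer these $\mathfrak c$ disjoint dense semicontinua from $W$ down to $Y$ along $\pi_Y$, and \textbf{this is the step I expect to be the main obstacle}. By irreducibility, $\pi_Y$ carries proper subcontinua of $W$ to proper subcontinua of $Y$, so it sends each composant of $W$ into a single composant of $Y$; the trouble is that $\pi_Y$ may identify distinct remainder points, so images of distinct composants need not stay disjoint. The clean way around this is to prove that $Y$ itself is indecomposable, for then its own $\mathfrak c$ pairwise disjoint dense composants (one of which contains the dense semicontinuum $\kappa$) are exactly what is required. To get indecomposability of $Y$ I would argue that the irreducible map $\pi_Y$ induces a bijection between the composants of $W$ and those of $Y$; equivalently, a hypothetical decomposition $Y=A\cup B$ into proper subcontinua should lift to a decomposition of $W$, contradicting the indecomposability just established. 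The genuine hurdle is the lifting: because $\pi_Y$ need not be monotone or confluent, a subcontinuum of $Y$ need not be the image of a subcontinuum of $W$. The hard part will be to circumvent this by using that in any such decomposition both $A$ and $B$ have nonempty interior, and then exploiting the density and continuum‑wise connectedness of $\kappa$ together with the irreducibility of $\pi_Y$ to manufacture subcontinua of $W$ mapping onto $A$ and onto $B$; their union maps onto $Y$ and so, by irreducibility, is all of $W$, the desired contradiction.
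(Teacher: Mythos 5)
Your construction of $W=\overline{\Delta}\subseteq Y\times C$ and your argument that it is an indecomposable continuum with $\Delta$ as a composant are correct; this is exactly the continuum used in the paper's proof (there called $Z$, with indecomposability quoted from the proof of Theorem 1.1 of \cite{lip4}), and your direct argument via singleton fibres over $\kappa$ and irreducibility of $\pi_C$ is a clean substitute for that citation. The reduction of the embedding statement to the ``moreover'' clause is also fine. The genuine gap is the final transfer step, which you rightly flag as the main obstacle but then propose to close by a strategy that cannot work: one cannot prove that $Y$ is indecomposable, because that is false in general. As noted in the remark following Theorem 3 (see \cite[Section 5]{mou} and \cite[Section 1.1]{lip4}), there is a hereditarily decomposable plane continuum containing a homeomorphic copy of a composant $\kappa$ of the bucket-handle continuum; the closure of that copy is then a compactification of $\kappa$ which is hereditarily decomposable, in particular decomposable. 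This is precisely why the theorem's conclusion is stated in terms of $\mathfrak c$ pairwise disjoint dense semicontinua --- which need not be composants of $Y$ --- rather than indecomposability of $Y$; no lifting argument for decompositions $Y=A\cup B$, however ingenious, can exist.

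The idea you are missing is to abandon the global map $\pi_Y\colon W\to Y$ and instead extend the homeomorphism it induces between the two copies of $\kappa$: the restriction $\pi_Y\restriction\Delta$ is a homeomorphism of the composant $\Delta$ of $W$ onto the dense copy of $\kappa$ in $Y$, and by Lavrentiev's theorem \cite[Theorem 4.3.21]{eng} it extends to a homeomorphism $h$ between $G_\delta$-sets $W'\subseteq W$ (with $\Delta\subseteq W'$) and $Y'\subseteq Y$. By Cook's theorem \cite[Theorem 9]{cook}, a $G_\delta$-subset of an indecomposable continuum containing a composant contains $\mathfrak c$ composants; hence $W'$ contains $\mathfrak c$ composants of $W$, each dense in $W$. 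Their images under $h$ are pairwise disjoint (injectivity), continuum-wise connected (each proper subcontinuum of $W$ inside such a composant lies in $W'$ and maps onto a subcontinuum of $Y$), and dense in $Y$ (they are dense in $W'$, so their images are dense in $Y'$, which contains the dense set $\kappa$). This produces the $\mathfrak c$ pairwise disjoint dense semicontinua in $Y$ without requiring $\pi_Y$ to be monotone or confluent, and without any claim about the composant structure or indecomposability of $Y$ itself.
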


Theorem 2 extends \cite[Corollary 1.2]{lip4}. In Theorem 3, ``Suslinian''  cannot be replaced with ``hereditarily decomposable'', as there exists a hereditarily decomposable plane continuum which homeomorphically contains  composants of the bucket-handle continuum. See \cite[Section 5]{mou} and \cite[Section 1.1]{lip4}.   
We also remark that Theorem 3 is false for indecomposable connected sets in general; there exists an indecomposable connected subset of the plane which can be embedded into a Suslinian continuum    \cite[Examples 2 and 4]{lip5}.

\section{Preliminaries}

All spaces under consideration are separable and metrizable. 

A \textit{continuum} is a compact connected metrizable space with more than one point.  
      An \textit{arc} is a continuum homeomorphic to $[0,1]$.  A connected set $X$ is \textit{decomposable} if $X$ can be written as the union of two proper closed connected subsets.   Otherwise, $X$ is \textit{indecomposable}. 

An \textit{indecomposable semicontinuum} is a continuum-wise connected space which cannot be written as the union of two proper closed connected subsets. 
      A \textit{composant} of a continuum  is the union of all proper subcontinua that contain a given point.  Observe that each composant of an indecomposable continuum is an indecomposable semicontinuum. The class of spaces which are homeomorphic to composants of indecomposable continua includes all \textit{singular dense meager composants}; see  \cite{lip4}. 

A continuum $Y$ is:

\begin{itemize}
\item \textit{hereditarily unicoherent} if $H\cap K$ is connected for every two subcontinua $H$ and $K$;
\item \textit{regular} if $Y$ has a basis of open sets with finite boundaries;
\item \textit{hereditarily locally connected} if every subcontinuum of $Y$ is locally connected;
\item  \textit{rational} if $Y$ has a basis of open sets with countable boundaries;  
\item \textit{Suslinian} if $Y$ contains no uncountable collection of pairwise disjoint subcontinua  \cite{lel}; and 
\item \textit{hereditarily decomposable} if every subcontinuum of $Y$ is decomposable.
\end{itemize}
For continua, it is well-known that: 
$$ \text{regular }\Rightarrow \text{ hereditarily locally connected } \Rightarrow \text{ rational } \Rightarrow \text{ Suslinian}$$ $$\Rightarrow \text{hereditarily decomposable}\Rightarrow \text{one-dimensional}.$$    One-dimensional continua are  frequently called \textit{curves}.

For any topological space $X$ we let $2^X$ denote the set of non-empty closed subsets of $X$. 
A sequence $(A_n)\in [2^X]^\omega$ converges to $X$ in the Vietoris topology provided for every finite collection of non-empty open sets $U_1,\ldots,U_k\subset X$   there exists $N<\omega$ such that $$A_n\in \textstyle\langle U_1,\ldots,U_k\rangle:=\{A\in 2^X:A\cap U_i\neq\varnothing \text{ for each } i\leq k\}$$ 
 for all $n\geq N$.  If $(X,d)$ is compact,  then  $A_n\to X$  in the Vietoris topology  if and only if $\lim\limits_{n\to\infty}d_H(A_n,X)=0$.   Here $d_H(A_n,X)=\sup_{x\in X}d(x,A_n)$ is the Hausdorff distance between $A_n$ and $X$. A  subset $E$ of $X$ is  said to be \textit{$\varepsilon$-dense} if $d_H(E,X)<\varepsilon$, i.e.\  if $E$ intersects every ball of radius $\varepsilon$ in $X$.

\section{Proofs}

\subsection*{Proof of Theorem 1}Let $Y$ be a hereditarily unicoherent Suslinian continuum.  We will find $\varepsilon>0$ such that every two $\varepsilon$-dense subcontinua of $Y$ intersect. 

Note that $Y$ is decomposable, so there exist proper subcontinua $H$ and $K$ of $Y$ such that $Y=H\cup K$.   Let $W$ be an open subset of $Y$ such that $H\cap K\subset W$ and $\overline W\neq Y$. Each connected component  of  $Y\setminus \overline W$ contains a non-degenerate continuum by \cite[Lemma 6.1.25]{eng}, so the Suslinian property of $Y$ implies that the set of connected components of $Y\setminus \overline W$ is countable. By Baire's theorem there is a component $C$ of $Y\setminus \overline W$ such that $C$ has non-empty interior in $Y$.  Let $\varepsilon>0$ such that $H\setminus K$, $K\setminus H$, and $C$ each contain open balls of radius $\varepsilon$.  Let $E_0$ and $E_1$ be any two $\varepsilon$-dense subcontinua of $Y$.  By hereditary unicoherence of $Y$,  $$M:=(\overline C \cup E_0\cup E_1)\cap (H\cap K)$$  is connected. Note that $M=(E_0\cap H\cap K)\cup (E_1\cap H\cap K)$, where each set in that union is non-empty and closed. Therefore $E_0\cap E_1\neq\varnothing$.\hfill$\blacksquare$

\medskip

We now prepare to prove Theorem 2. Following \cite[Definition 4.5]{and}, if  $X$ is a semicontinuum, $K\subset X$, and $\mathcal U$ is a finite collection of open subsets of $X$, then we say $K$ \textit{disrupts} $\mathcal U$ if no continuum in $X\setminus K$ intersects each member of $\mathcal U$. 

\begin{ul}If $X$ is an indecomposable semicontinuum, then no finite collection of non-empty  open subsets of $X$ is disrupted by (the union of) finitely-many proper continua $$K_0,K_1,\ldots,K_{n-1}\subsetneq X.$$\end{ul}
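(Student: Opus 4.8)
The plan is to reduce the statement to the existence of a single \emph{dense, continuum-wise connected} subset $D$ of $X\setminus K$, where $K=K_0\cup\dots\cup K_{n-1}$. Granting such a $D$, the conclusion is immediate. Given any finite collection $\mathcal U=\{U_1,\dots,U_m\}$ of non-empty open sets, density of $D$ lets me choose $x_j\in U_j\cap D$ for each $j\le m$, and continuum-wise connectedness of $D$ lets me choose, for each $j<m$, a subcontinuum $C_j\subseteq D$ containing $x_j$ and $x_{j+1}$. Then $L=C_1\cup\dots\cup C_{m-1}$ is a continuum (a finite, consecutively overlapping chain of continua) which lies in $D\subseteq X\setminus K$ and meets every $U_j$; hence some continuum in $X\setminus K$ intersects each member of $\mathcal U$, i.e.\ $K$ does \emph{not} disrupt $\mathcal U$. (When $m=1$ I instead join $x_1$ to any second point of the infinite set $D$.) So everything rests on producing $D$.

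My first step toward $D$ is to record that every proper subcontinuum of an indecomposable semicontinuum is nowhere dense; consequently the finite union $K$ is closed and nowhere dense, and $X\setminus K$ is dense and open. I would then isolate the dichotomy that drives the construction: a continuum-wise component $D$ of $X\setminus K$ which has non-empty interior is automatically dense. The idea is that $\overline D$ is closed and connected, so if $\overline D\neq X$ then $\overline D$ is a proper connected set carrying an interior point, which one plays against indecomposability (and, in the compact case, directly against the nowhere-density of proper subcontinua) to reach a contradiction. Thus to find a dense $D$ it suffices to exhibit a continuum-wise component of $X\setminus K$ with non-empty interior, after which that component is taken to be $D$.

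The main obstacle is exactly this existence claim, and it is where indecomposability and the semicontinuum structure do the real work. In the compact case the argument is clean: the finitely many $K_i$ meet only finitely many composants of $X$, and since an indecomposable continuum carries $\mathfrak c$ pairwise disjoint dense composants, I simply take $D$ to be a composant disjoint from $K$, which is dense and continuum-wise connected. For a general, non-compact indecomposable semicontinuum no such reservoir of composants is at hand, and I must instead argue that $X\setminus K$ cannot be exhausted by continuum-wise components all of which are nowhere dense; were that so, $X$ would be covered by the nowhere-dense set $K$ together with these components, and I would leverage the irreducibility packaged in the definition of indecomposability to manufacture a forbidden decomposition of $X$ into two proper closed connected sets. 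Controlling the possibly many continuum-wise components of the open set $X\setminus K$ in a space that is nowhere locally connected is the delicate point, and here I expect to rely on the structural results for indecomposable semicontinua in \cite{and}. Finally, since a finite union of nowhere-dense closed sets is nowhere dense and $D$ is chosen to avoid every $K_i$, no separate argument is needed to pass from a single proper continuum to the union $K_0\cup\dots\cup K_{n-1}$.
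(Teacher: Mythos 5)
Your reduction (chaining continua through a dense continuum-wise connected $D\subset X\setminus K$) and your compact case (choosing a composant of $X$ disjoint from the finitely many $K_i$) are both correct, but the lemma's content lies precisely in the non-compact case --- Theorem 2 applies it to semicontinua such as composants of indecomposable continua --- and there your proposal has a genuine gap. You need a continuum-wise component of $X\setminus K$ with non-empty interior, and your proposed route to it (``were every component nowhere dense, $X$ would be covered by $K$ together with these components'') is not a contradiction: $X\setminus K$ may have uncountably many continuum-wise components, and nothing forbids covering a space by uncountably many nowhere dense sets (any space without isolated points is the union of its singletons). Baire category only rules out countably many. This is exactly the ``delicate point'' you defer to structural results in \cite{and}, but \cite{and} is cited in the paper only for the definition of ``disrupts''; no such existence theorem is available off the shelf, and it is not even clear that your stronger statement (existence of a dense continuum-wise component of $X\setminus K$) holds in general, whereas the lemma as stated does not require it.

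The paper closes this gap with a self-contained device that your sketch is missing. It supposes $K$ disrupts some finite collection and takes one, $\mathcal V=\{V_0,\dots,V_{l-1}\}$, of minimal size $l\geq 2$. Minimality forces the union $N$ of all continua in $X\setminus K$ meeting each of $V_1,\dots,V_{l-1}$ to contain a dense subset of $V_1$; and --- this is the key step --- every maximal semicontinuum $S\subset N$ can be pushed toward $K$ by boundary bumping along a continuum joining it to $V_0$, so that $\overline S$ meets some $K_i$. Consequently the closures of these maximal semicontinua cluster into at most $n$ connected pieces: a \emph{finite} number. A finite pigeonhole (not an uncountable covering argument) then produces one piece whose closure is a proper closed connected subset of $X\setminus V_0$ with non-empty interior, contradicting indecomposability. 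In short, the paper never proves your dense-component claim; it uses the disruption hypothesis itself to bound the number of relevant components by $n$, which is the idea your proposal lacks at its central step.
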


\begin{proof}Let $X$ be an indecomposable semicontinuum.  Let $K_0,K_1,\ldots,K_{n-1}\subsetneq X$ be continua.  Suppose for a contradiction that $K:=\bigcup \{K_i:i<n\}$ disrupts a finite collection of non-empty open sets. Let $l$ be the least positive integer with the property that some collection of non-empty open sets of size $l$ is disrupted by $K$. That is,  $$l=\min\{|\mathcal U|: \mathcal U \text{ is a collection of non-empty open subsets of }X\text{, and }K\text{ disrupts }\mathcal U\}.$$ Since $K$ is nowhere dense,  $l\geq 2$. Let $\mathcal V=\{V_0,V_1,\ldots,V_{l-1}\}$ be a collection of  non-empty open sets such that $K$ disrupts $\mathcal V$.  By minimality and finiteness of $l$,  
$$N:=\bigcup\{M\subset X\setminus K:M\text{ is a continuum and }M\cap V_j\neq\varnothing\text{ for each }j\geq 1\}$$ contains a dense subset of $V_1$. 

We claim that every constituent $M\subset N$ is contained in a semicontinuum $S\subset N$ such that $\overline S$ intersects some $K_i$. To see this, fix $p\in M$ and  $q\in  V_0$.  Since $X$ is a semicontinuum, there is a continuum $L\subset X$ such that $\{p,q\}\subset  L$. The assumption  $K$ disrupts $\mathcal V$ implies $(M\cup L)\cap K\neq\varnothing$, whence $L\cap K\neq\varnothing$. Boundary bumping  \cite[Lemma 6.1.25]{eng} in $L$ now shows that for each $i<\omega$ there is a continuum $L_i\subset L\setminus K$ such that $p\in L_i$ and $d(L_i,K)<2^{-i}$.   The semicontinuum $S:=\bigcup\{M\cup L_i:i<\omega\}$ is contained in $N$,  and $\overline {S}\cap K\neq\varnothing$ by compactness of $K$. We conclude that  $$N':=\bigcup\{\overline S:S\text{ is a maximal semicontinuum in } N\}$$ has at most $n$ connected components.   As $V_1\subset  \overline{N'}$,   this implies some component $C$ of $N'$  is dense in a non-empty open subset of $V_1$.  Then   $\overline C$ is a closed connected subset of $X\setminus V_0$ with non-empty interior. This violates indecomposability of $X$.\end{proof}

\subsection*{Proof of Theorem 2} Let $\{U_i:i<\omega\}$ be a basis for $X$ consisting of non-empty open sets. Put $\mathcal U_n=\langle U_0,\ldots,U_{n-1}\rangle$. Let $K_0\subsetneq X$ be any continuum. Assuming  mutually disjoint $K_0,\ldots,K_{n-1}$ have been defined so that $K_i\in \mathcal U_i$ for each $i<n$, by Lemma 1 there exists $K_n\in \mathcal U_n$ such that $K_n\cap K_i=\varnothing$ for each $i<n$. The sequence $(K_n)$ is as desired. \hfill$\blacksquare$

\subsection*{Proof of Theorem 3}Suppose $X$ is homeomorphic to a composant of  indecomposable continuum $I$.  Let $Y=\gamma X$ be any compactification of $X$ with associated embedding $\gamma:X\hookrightarrow \gamma X$.  Let  $\iota:X\hookrightarrow I$ be a homeomorphic embedding such that $\iota[X]$ is a composant of $I$.  Let $Z$ be the closure of the diagonal $\{\langle \iota(x),\gamma(x)\rangle:x\in X\}$ in the product $I\times Y$. More precisely,  define  $\xi:X\hookrightarrow I\times Y$ by $\xi(x) =\langle \iota(x),\gamma(x)\rangle$ and put $Z=\overline{\xi[X]}$. By the proof of \cite[Theorem 1.1]{lip4}, $Z$ is an indecomposable continuum and $\xi[X]$ is a composant of $Z$. By Lavrentiev's Theorem \cite[Theorem 4.3.21]{eng}, the homeomorphism $\pi_Y\restriction\xi[X]$  extends to a homeomorphism between $G_\delta$-sets $Z'\subset Z$ (with $\xi[X]\subset Z'$) and $Y'\subset Y$. By \cite[Theorem 9]{cook}, $Z'$ contains $\mathfrak c$ composants of $Z$.  Thus $Y'$ contains $\mathfrak c$  pairwise disjoint semicontinua which are dense in $Y$.\;\hfill$\blacksquare$

\section{Question}

  A \textit{ray} is a homomorphic image of the interval $[0,\infty)$. If $h:[0,\infty)\to X$ is a homeomorphism, then $X$ is a ray which \textit{limits onto itself} if $h([n,\infty))$ is dense in $X$ for every  $n<\omega$. This is equivalent to saying $X$ is first category in the sense of Baire.  If $Y$ is a one-dimensional non-separating plane continuum which is the closure of a ray that limits onto itself, then $Y$ is indecomposable  \cite[Theorem 8]{cur}.

\begin{uq}If $Y$ is a continuum in the plane which contains first category ray (limiting onto itself), then is $Y$ non-Suslinian? \end{uq}








\begin{thebibliography}{HD}
\bibitem{and}D. Anderson, The shore point existence problem is equivalent to the non-block point existence problem. Topology Appl. 262 (2019), 1--10.

\bibitem{cook}H. Cook, On subsets of indecomposable continua,  Colloquium Mathematicae 13.1 (1964) 37--43.

\bibitem{cur}S. Curry, 
One-dimensional nonseparating plane continua with disjoint $\varepsilon$-dense subcontinua.
Topology Appl. 39 (1991), no. 2, 145--151.

\bibitem{eng}R. Engelking, General Topology, Revised and completed edition Sigma Series in Pure Mathematics 6, Heldermann  Verlag, Berlin, 1989.

\bibitem{kat}H. Kato,   Continuum-Wise Expansive Homeomorphisms. Canadian Journal of Mathematics, 45(3), (1993) 576--598.

\bibitem{newkat}H. Kato,  Monotone maps of G-like continua with positive topological entropy yield indecomposability. Proc. Amer. Math. Soc. 147 (2019), no. 10, 4363--4370.

\bibitem{minc}J. Krasinkiewicz and P. Minc, 
Approximations of continua from within, Bull. Acad. Polon. Sci. Sér. Sci. Math. Astronom. Phys. 25 (1977), no. 3, 283--289.

\bibitem{lip5}D.S. Lipham,  Dispersion points and rational curves,  Proc.
Amer. Math. Soc.148 (2020), 2671--2682.

\bibitem{lip4}D.S. Lipham, Singularities of meager composants and filament composants, Topology Appl., Volume 260 (2019) 104--115.

\bibitem{lel}A. Lelek, On the topology of curves II,  Fund. Math. 70 (1971), 131--138.

\bibitem{mou2}C. Mouron, Mixing sets, positive entropy homeomorphisms and non-Suslinian continua. Ergodic Theory Dynam. Systems 36 (2016), no. 7, 2246--2257. 

\bibitem{mou1}C. Mouron, The topology of continua that are approximated by disjoint subcontinua, Topology Appl., Volume 156 (2009) 558--576.

\bibitem{mou}C. Mouron and N. Ordoñez, Meager composants of continua, Topology Appl., Volume 210 (2016) 292--310.

\bibitem{sei}G.T. Seidler, The topological entropy of homeomorphisms on one dimensional continua, Proc.
Amer. Math. Soc. 108 (1990) 1025--1030. 

\bibitem{tym}E.D. Tymchatyn, Some rational continua. Rocky Mountain J. Math. 13  no. 2 (1983) 309--320.




\end{thebibliography}
\end{document}